\documentclass{amsart}
\usepackage{amssymb}
\usepackage{graphicx}
\usepackage[all]{xy}
\usepackage{fullpage}
\vfuzz2pt 
\hfuzz2pt 
\newtheorem{thm}{Theorem}

\newtheorem{lem}[thm]{Lemma}
\newtheorem{prop}[thm]{Proposition}

\theoremstyle{definition}

\theoremstyle{remark}


\newcommand{\Gal}{\operatorname{Gal}}
\newcommand{\chr}{\operatorname{char}}

\newcommand{\supp}{\operatorname{supp}}
\begin{document}

\title[A proof for a part of noncrossed product theorem]{A proof for a part of noncrossed product theorem}%
\author{Mehran Motiee}%
\address{Faculty of Basic Sciences, Babol Noshirvani University of Technology, Babol, Iran}%
\email{motiee@nit.ac.ir}%

\subjclass{11R52, 16K20, 12J20}%
\keywords{Division algebra, Crossed product, Valuation}

\begin{abstract}
The first examples of noncrossed product division algebras were given by Amitsur in 1972. His method is based on two basic steps:
(1) If the universal division algebra $U(k,n)$ is a $G$-crossed product 
then every division algebra of degree $n$ over $k$ should be a $G$-crossed product; (2) There are 
two division algebras over $k$ whose maximal subfields do not have a common Galois group. In this note, we give a short proof 
for the second step in the case where $\chr k\nmid n$ and $p^3|n$.
\end{abstract}

\maketitle

Let $F$ be a field. An $F$-central division algebra 
$D$ is called a crossed product if it contains a maximal subfield $K$ which is Galois over $F$.
If $\operatorname{Gal}(K/F)\cong G$, then $D$ is said to be a $G$-crossed product. 
Until 1972, the existence of a noncrossed product division algebra remained unsolved and 
influenced by K\"{o}th's theorem, which says that every division algebra contains a maximal subfield 
which is separable over its center, many mathematicians believed that the answer to this question is negative.
Nevertheless, the first examples of noncrossed products 
were given by Amitsur in 1972 \cite{Am72}. His examples of noncrossed
products are certain universal
division algebras which are defined as follows: Let $k$ be an infinite field and
let $X=\{ x_{ij}^{\left( r\right) }| 1\leq i,j\leq n,r\geq 2\}$ be a set of
independent commuting variables. Let $k[X]$ be the integral domain of polynomials
in all $x_{ij}^{\left( r\right) }$ with coefficients in $k$ and let
$k(X)$ the field of fractions $k[X]$. For each $r$, the standard generic
$n$ by $n$ matrices over $k$ are defined by
$\xi ^{\left( r\right) }=\left[ x_{ij}^{\left( r\right) }\right] \in M_{n}\left( F\left[ X\right] \right) \subseteq M_{n}\left( F\left( X\right) \right) $.
The $F$-subalgebra of $M_{n}\left( F\left( X\right) \right)$ generated by
$\{  \xi ^{\left( k\right) }| r < n\} $ is called the generic matrix algebra over $k$ of degree $n$. 
We denote this generic algebra by $B_n$.
It is well-known that $B_n$ is a (noncommutative) domain \cite[Prop. 20.5]{Pi82}, so its center is an
integral domain. We define the universal division algebra over $k$ of degree $n$ by
$UD(k,n)=B_n\otimes_{R}L_n$ where $R$ is the center of
$B_n$ and $L_n$ is the fraction field of $R$. Using the theory of PI-rings, it can be shown that
$UD(k,n)$ is a division algebra of degree $n$ (cf. \cite[Prop. 20.8]{Pi82}). The following theorem is the key result
that Amitsur proved in order to show that certain $UD(k,n)$ are noncrossed products. For a proof 
of this theorem in its full generality\footnote{In his original paper Amitsur only considers the case $k=\mathbb{Q}$, the field of rational numbers.}
see \cite[p. 417]{Pi82}. 
\begin{thm}\label{t1}
If $UD(k,n)$ is a $G$-crossed product, then every division algebra of degree $n$
 whose center contains a subfield isomorphic to $k$ is also a $G$-crossed product.
\end{thm}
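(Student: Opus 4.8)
The plan is to exploit the \emph{genericity} of $UD(k,n)$: every central division algebra of degree $n$ over a field containing $k$ is obtained by specializing the generic matrices, and the finite package of algebraic data witnessing a $G$-crossed product structure over $L_n$ can be transported along a sufficiently generic such specialization. Before starting I would reduce to the case that the center $Z$ of the given algebra $D$ is finitely generated over $k$, since $D$ is determined by finitely many structure constants, hence descends to a division algebra over a finitely generated subfield $Z_0\subseteq Z$, and a $G$-crossed product structure over $Z_0$ base-changes to one over $Z$.

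First I would record the crossed product data of $UD(k,n)$ over its center $L_n$. Fixing a Galois maximal subfield $K\supseteq L_n$ with $\Gal(K/L_n)\cong G$, the primitive element theorem gives $K=L_n(\theta)$ with $\theta\in UD(k,n)$ whose minimal polynomial $g\in L_n[t]$ is monic, separable, of degree $n$. As $K/L_n$ is Galois, every conjugate lies in $K=L_n[\theta]$, so $\sigma(\theta)=p_\sigma(\theta)$ with $p_\sigma\in L_n[t]$, and in $L_n[t]$ one has the congruences $g(p_\sigma(t))\equiv 0$ and $p_\sigma(p_\tau(t))\equiv p_{\sigma\tau}(t)$ modulo $g(t)$ that encode the whole $G$-action. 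This is a finite list of elements of $L_n$ subject to polynomial identities.

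Next I would realize $D$ as a specialization. Picking generators of $D$ and a splitting field $E\supseteq Z$ with $D\otimes_Z E\cong M_n(E)$, each generator becomes an explicit matrix, and substituting its entries for the variables $x_{ij}^{(r)}$ defines a $k$-algebra map on $k[X]$ sending the generic matrices $\xi^{(r)}$ to those generators. By the theory of PI-rings this respects the central structure, so the center $R$ of $B_n$ maps into $Z$; localizing at the kernel prime yields a local ring $\mathcal O\subseteq L_n$ with residue field $Z$ and a homomorphism from the $\mathcal O$-subalgebra generated by the $\xi^{(r)}$ onto $D$. To create room I would make this specialization generic, for instance by first conjugating the generators by a matrix of independent indeterminates and adding central indeterminates, producing a family of specializations over an irreducible parameter variety, each with residue algebra isomorphic to $D$. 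Over the locus where the finitely many elements $g,p_\sigma$ and the structure constants are integral, where the discriminant of $g$ stays a unit, and where $\theta$ specializes to $\bar\theta$ of degree exactly $n$ over $Z$, the data descend: $\bar g\in Z[t]$ is the monic separable minimal polynomial of $\bar\theta$, so $Z(\bar\theta)\subseteq D$ is a maximal subfield; the specialized congruences make $\{\bar p_\sigma(\bar\theta)\}$ the $n$ distinct roots of $\bar g$, all lying in $Z(\bar\theta)$; and $\sigma\mapsto\bar p_\sigma$ becomes an isomorphism $G\cong\Gal(Z(\bar\theta)/Z)$. Thus $D$ would be a $G$-crossed product.

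The hard part will be the nonemptiness of that good locus: guaranteeing that at least one specialization realizing $D$ keeps the discriminant of $g$ nonzero and $\bar\theta$ of full degree $n$, so that separability and the $G$-action do not collapse. The conditions defining the bad locus are closed, but showing they are \emph{proper} --- i.e.\ that the nondegeneracy holding for the universal specialization persists after mapping generically onto the fixed algebra $D$ --- is exactly where the generic nature of $UD(k,n)$ and the explicit description of its center $R$ through central polynomials are indispensable. This is the step I expect to carry the real weight of the argument.
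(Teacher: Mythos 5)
First, note that the paper does not actually prove Theorem \ref{t1}; it defers to \cite[p.~417]{Pi82}, and the proof given there is precisely the specialization argument you outline. Your overall strategy is therefore the right one, but, as you yourself concede in your last paragraph, the proposal stops short of the one step that carries the whole theorem: producing a single specialization of the generic matrices that (a) maps $B_n$ \emph{onto} $D$, so that the image of the center $R$ really lands in $Z$ --- this is not automatic ``by the theory of PI-rings'' for an arbitrary substitution, since a degenerate substitution can have, say, commutative image; (b) keeps the finitely many denominators of the coefficients of $g$ and of the $p_\sigma$ invertible; and (c) keeps the discriminant of $g$ and the differences $p_\sigma(\theta)-p_\tau(\theta)$ away from zero. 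Without establishing that this ``good locus'' meets the specializations realizing $D$, the argument proves nothing, so as written there is a genuine gap.

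The standard way to close it --- and the actual content of the cited proof --- is a density argument you should at least name. Each of the finitely many elements of $R$ (or of $B_n\cdot R$) that must specialize to something nonzero is a polynomial function of the matrix entries that does not vanish identically on $m$-tuples of $n\times n$ matrices over the algebraic closure $\overline{Z}$, because it is nonzero in $B_n$ and nonidentities of the generic matrix algebra over an infinite field are detected on $M_n(\overline{Z})$. Since $Z\supseteq k$ is infinite and $D\otimes_Z\overline{Z}\cong M_n(\overline{Z})$, the subset $D^m$ is Zariski-dense in $M_n(\overline{Z})^m$, so the finitely many nonvanishing conditions, together with the nonvanishing of a central polynomial that guarantees the chosen tuple generates $D$ over $Z$, are simultaneously satisfiable by a tuple of elements of $D$ itself; this is exactly the nonemptiness you left open. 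Separately, rather than imposing ``$\bar\theta$ has degree exactly $n$'' as yet another open condition, it is cleaner to note that $g$ is the reduced characteristic polynomial of $\theta$, hence specializes to that of $\bar\theta$, which in a division algebra is a power of the minimal polynomial; separability of $\bar g$ then forces that power to be $1$, so $Z(\bar\theta)$ is automatically a maximal subfield. Your preliminary reduction to a finitely generated center and your transcription of the Galois action through the congruences $g(p_\sigma)\equiv 0$ and $p_\sigma(p_\tau)\equiv p_{\sigma\tau} \pmod{g}$ are fine as stated.
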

In light of Theorem \ref{t1}, to prove that $UD(k,n)$ is not a crossed
product it suffices to produce two different examples of division algebras over $k$ whose
maximal subfields do not have a common Galois group. The aim of this note is to present a simple
proof with least possible computations for this step in the case $\chr k\nmid n$ and $p^3|n$ for some prime $p$.
We note that this case is the most general case independent of the properties of 
$k$ in which it is proved that $UD(k,n)$ is not a crossed product (see \cite{Am74}).
In our approach we employ Skolem-Noether Theorem and a basic property of Henselian fields.
We emphasize that our method neither leads to a new insight into the noncrossed product theorem nor enriches the theory of valued division algebras.
This is just a shortcut for proving a well-known theorem avoiding standard results about the Galois theory of Henselian fields.

First, we recall some preliminaries from the theory of valued division algebras. 
Let $\Gamma$ be a totally ordered additive abelian group. Let $\infty$ is a symbol such that $\gamma<\infty$ and 
$\gamma+\infty=\infty+\infty=\infty$ for all $\gamma\in \Gamma$. By a valuation on $D$ we mean a function 
$$v:D\to \Gamma\cup\{\infty\}$$ satisfying the following conditions: for all $x,y\in D$ 
\begin{enumerate}
  \item [(i)] $v(x)=\infty$ if and only if $x=0$;
  \item [(ii)] $v(x+y)\geq\min\{v(x),v(y)\}$;
  \item [(iii)] $v(xy)=v(x)+v(y)$.
\end{enumerate} 
To a valuation $v$ on $D$ we associate the following structures: $\Gamma_D=v(D^*)$, the value group of $D$, which is a subgroup of $\Gamma$. 
$\mathcal{O}_D=\{x \in D \mid v(x) \geq 0\}$, which is a subring of $D$. $\mathcal{O}_D$ is called the valuation ring of $D$. It is 
easy to observe that $\mathcal{O}_D$ is a local ring with unique maximal left ideal $\mathfrak{m}_D=\{x \in D \mid v(x)>0\}$; so 
$\mathfrak{m}_D$ is a two-side ideal of $\mathcal{O}_D$. The quotient division ring $\overline{D}=\mathcal{O}_D/\mathfrak{m}_D$
is called the residue division ring. For any $a\in D$ we write $\overline{a}$ for the image of $a$ in $\overline{D}$. 
Clearly, restricting $v$ to $F$ is a valuation on $F$. Moreover, it is not hard to show that the inclusion map from 
$F$ to $D$ gives an embedding $\overline{F}\hookrightarrow Z(\overline{D})$. Hence $\overline{D}$ is an $\overline{F}$-algebra. 

Let $F$ be a field equipped with a valuation $v$. The valued field $F$ (or the valuation $v$) is called Henselian if $v$ has a unique extension to 
each field $L$ algebraic over $F$. So, it is clear that every finite extension of a Henselian field is Henselian.
For $f=\sum_{i=0}^n a_i X^i \in \mathcal{O}_F[X]$, 
we write $\overline{f}=\sum_{i=0}^n \bar{a}_i X^i \in \overline{F}[X]$ and $f^{\prime}$ for the formal derivative of $f$.
One can observe that Henselian fields have the following property:
for each $f \in \mathcal{O}_F[X]$, 
if there is a $c \in \overline{F}$ with $\overline{f}(c)=0$ but $\overline{f}^{\prime}(c) \neq 0$, 
then there is a unique $a \in \mathcal{O}_F$ with $f(a)=0$ and $\bar{a}=c$ (see \cite[Th. 4.1.3]{eng05}). 

One of the most important features of a Henselian 
field $F$ that will be used in our proof is that if $\chr \overline{F}\nmid n$, then
 $1+\mathfrak{m}_F$ is $n$-divisible, i.e., $(1+\mathfrak{m}_F)^n=1+\mathfrak{m}_F$. To see this, take any $a\in 1+\mathfrak{m}_F$. 
Let $f=X^n-a\in \mathcal{O}_F[X]$. Its image in $\overline{F}[X]$ is $\overline{f}=X^n-\overline{1}$. 
But $\overline{1}$ is a simple root of $\overline{f}$ since $\chr\overline{F}\nmid n$. 
As $F$ is Henselian, $f$ has a root $b\in \mathcal{O}_F$ with $\overline{b}=\overline{1}$. That is $b^n=a$, as desired.

The following lemma is a known fact. However, for convenience of the reader, we provide a proof for it.
\begin{lem}\label{lem2}
  Let $D$ be a central $F$-division algebra of index $n$. Let $v$ be a valuation on $D$ such that the restriction of $v$ to $F$ is Henselian. 
if $\chr\overline{F}\nmid n$ then in $1+\mathfrak{m}_D$ each element has a unique $n$-th root.
\end{lem}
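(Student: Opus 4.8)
The plan is to reduce the whole statement to commutative subfields of $D$ and then invoke the $n$-divisibility of $1+\mathfrak{m}$ for Henselian fields that was recalled just before the lemma. The key observation is that any single element $a\in D$ generates, together with the central subfield $F$, a commutative subring $F[a]$; since $\dim_F D=n^2<\infty$, this subring is a finite-dimensional commutative domain over $F$, hence a field, which I denote $F(a)$. Being a finite extension of the Henselian field $F$, the field $F(a)$ is itself Henselian, and its residue field contains $\overline{F}$, so $\chr\overline{F(a)}=\chr\overline{F}\nmid n$. Moreover $\mathfrak{m}_{F(a)}=\mathfrak{m}_D\cap F(a)$, so that $1+\mathfrak{m}_{F(a)}\subseteq 1+\mathfrak{m}_D$.

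For existence I would take $a\in 1+\mathfrak{m}_D$ and pass to $K=F(a)$. Then $a\in 1+\mathfrak{m}_K$, and since $K$ is Henselian with $\chr\overline{K}\nmid n$, the recalled property furnishes an element $b_0\in 1+\mathfrak{m}_K\subseteq 1+\mathfrak{m}_D$ with $b_0^n=a$. As $\mathfrak{m}_K\subseteq K$, this distinguished root satisfies $b_0\in F(a)$, a point I will use crucially for uniqueness.

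For uniqueness the idea is that every $n$-th root of $a$ must commute with $a$, which places it inside a commutative subfield together with $b_0$. Concretely, let $b\in 1+\mathfrak{m}_D$ be an arbitrary root of $X^n=a$. Since $a=b^n$ is a power of $b$, we have $a\in F(b)$, hence $b_0\in F(a)\subseteq F(b)=:M$. Thus both $b$ and $b_0$ lie in the single commutative field $M$, which is again Henselian with residue characteristic prime to $n$, and both lie in $1+\mathfrak{m}_M$ (the group of principal units). Then $u:=bb_0^{-1}\in 1+\mathfrak{m}_M$ satisfies $u^n=b^nb_0^{-n}=aa^{-1}=1$; applying the Henselian property to $X^n-1$, whose reduction has $\overline{1}$ as a simple root because $\chr\overline{M}\nmid n$, forces $u=1$, so $b=b_0$. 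Hence the root is unique.

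The main obstacle is the noncommutativity of $D$: a priori two $n$-th roots of the same element need not commute, so one cannot simply quotient them as in a field. The device that overcomes this is the elementary remark that any $n$-th root $b$ of $a$ commutes with $a$, so both $b$ and the distinguished commutative root $b_0\in F(a)$ sit inside the commutative field $F(b)$. Once this reduction to a field is achieved, the remainder is the standard Hensel uniqueness for $n$-th roots of unity, and no further noncommutative input is needed.
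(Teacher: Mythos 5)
Your proof is correct, and while the existence half coincides with the paper's (pass to the commutative field $F(a)$, which is Henselian as a finite extension of $F$, and apply the $n$-divisibility of $1+\mathfrak{m}$), your uniqueness argument takes a genuinely different route. The paper stays inside the noncommutative algebra $D$: writing two roots as $1+g$ and $1+h$, it derives $n(g-h)=\sum_{j\ge 2}\binom{n}{j}\bigl(h^j-g^j\bigr)$ and reaches a contradiction by a direct valuation estimate, the right-hand side having value strictly greater than $v(g-h)$ while the left-hand side has value exactly $v(g-h)$ since $v(n)=0$. You instead trap everything inside one commutative Henselian subfield: any root $b$ of $a$ satisfies $a=b^n\in F(b)$, so $F(a)\subseteq F(b)$ and in particular the distinguished root $b_0\in F(a)$ commutes with $b$; then $u=bb_0^{-1}\in 1+\mathfrak{m}_{F(b)}$ is an $n$-th root of unity reducing to $\overline{1}$, and the uniqueness clause of Hensel's lemma applied to $X^n-1$ at the simple root $\overline{1}$ forces $u=1$. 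Both arguments are sound. Yours replaces the explicit binomial estimate with the structural observation that all relevant elements lie in a single commutative subfield, which makes the uniqueness step transparent and reuses the Hensel machinery already recalled before the lemma; the paper's computation, by contrast, never needs the uniqueness clause of Hensel's lemma and shows uniqueness of the root among \emph{all} of $1+\mathfrak{m}_D$ by a self-contained valuation inequality.
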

\begin{proof}
At first, we note that if $\chr D>0$ then $\chr \overline{F}=\chr D$ and if $\chr D=0$ then either $\chr \overline{F}=0$ or $\chr \overline{F}>0$. 
In any case, we have $\chr D\nmid n$.

  Let $a\in 1+\mathfrak{m}_D$ and
Let $K=F(a)$. According to what we mentioned above, the restriction of $v$ to $K$ is Henselian because $K/F$ is 
a finite extension. On the other hand, it is clear that $a\in 1+\mathfrak{m}_K$. So, by the argument in the 
last paragraph, $a$ has a $n$-th root in $K$, hence in $D$. It remains to show that this root is unique. 
Let there are $g,h\in \mathfrak{m}_D$ such that $(1+g)^n=a=(1+h)^n$ 
and $g\neq h$. Then, we have
$$
n\left( g-h\right) =\sum ^{n}_{j=2}\begin{pmatrix} n \\ j \end{pmatrix}\left( h^{j}-g^{j}\right).
$$
Set $t=g-h$. Therefore, we can rewrite the above equation as follows
\begin{equation}\label{55}
nt =\sum ^{n}_{j=2}\begin{pmatrix} n \\ j \end{pmatrix}\left( h^{j}-(t+h)^{j}\right).
\end{equation}
But it is clear that $v(t)>0$.
So $v\left( -t^{2}\right) =v\left( t^{2}\right) =2v\left( t\right) > v\left( t\right) $. Likewise $v(th)>v(t)$ and $v(ht)>v(t)$. Hence $v(h^2-(t+h)^2)=v(-t^2-th-ht)>v(t)$.
Similarly, it can be concluded that $v(h^j-(t+h)^j)>v(t)$ for all $j\geq 3$. So we have
\begin{equation}\label{555}
  v\left(\sum ^{n}_{j=2}\begin{pmatrix} n \\ j \end{pmatrix}\left( h^{j}-(t+h)^{j}\right)\right)>v(t).
\end{equation}
On the other hand, since $\chr D\nmid n$, $n$ is invertible in $\mathcal{O}_D$. Therefore, $v(n)=0$, and we have as a result $v(nt)=v(n)+v(t)=v(t)$. Thus, 
from \eqref{55} and \eqref{555} it follows that
$v(t)>v(t)$, which is a contradiction. 
\end{proof}

In what follows, we recall a certain class of division algebras which are a special case of Mal'cev-Neumann construction (see \cite[\S 14]{Lam91}).  
Let $k$ be an arbitrary field and let $n_1,\dots,n_r$ be integers (not necessarily distinct) with $n_i\geq 2$ for all $i$.
let
$$
n=n_1\dots n_r\quad\text{and}\quad m=\operatorname{lcm}(n_1,\dots,n_r).
$$
Assume that $k$ contains a primitive $m$-th root of unity $\zeta_m$, and let
$\zeta_{n_i}=\zeta_m^{m/n_i}$, for all $i=1,\dots,r$; so $\zeta_{n_i}$ is a primitive
$n_i$-th root of unity. Let $x_1,y_1,\dots,x_r,y_r$ be $2r$ independent indeterminates over $k$. 
Consider the ring of iterated Laurent series $k((x_1))((y_1))\dots((x_{r}))((y_{r}))$ with the multiplication defined by the following rules:
\begin{equation}\label{e3}
   \begin{array}{l}
   x_ia=ax_i,\quad y_ia=ay_i\quad \text{for}\ a\in k \\
   x_ix_j=x_jx_i,\quad y_iy_j=y_jy_i \\
   x_iy_j=y_jx_i\quad \text{for}\quad i\neq j \\
   x_iy_i=\zeta_{n_i} y_ix_i.
 \end{array}
\end{equation}
One can observe that with relations defined in \eqref{e3}, $k((x_1))((y_1))\dots((x_{r}))((y_{r}))$ 
is a division algebra of index $n$ and its center is $F=k((x_1^{n_1}))((y_1^{n_1}))\dots((x_r^{n_r}))((y_r^{n_r}))$ (cf. \cite[p. 100]{J75}). 
We denote this division algebra by $\Delta_{2r}(k; n_1,\dots,n_r)$. Recall that nonzero elements of $\Delta_{2r}(k; n_1,\dots,n_r)$ are formal series
\begin{equation}\label{e1-1}
  f=\sum_{(i_1,\dots,i_n)}a_{(i_1,j_1,\dots,i_{r},j_{r})}x_1^{i_1}y_1^{j_1}\dots x_r^{i_r}y_r^{j_r},\quad a_{(i_1,j_1,\dots,i_{r},j_{r})}\in k
\end{equation}
where $\supp (f)=\{(i_1,j_1,\dots,i_r,j_r)|a_{(i_1,j_1,\dots,i_r,j_r)}\neq 0\}$ is a well-ordered subset of the abelian group 
$\oplus_{i=1}^{2r} \mathbb{Z}$ ordered by the right-to-left lexicographic ordering. 
It can be easily seen that the map 
$$v(f)=\min\supp(f)\quad \text{for}\quad 0\neq f\in \Delta_{2r}(k; n_1,\dots,n_r)$$ 
is a valuation on $\Delta_{2r}(k; n_1,\dots,n_r)$ with value group 
$\Gamma_{\Delta_{2r}(k; n_1,\dots,n_r)}=\oplus_{i=1}^{2r} \mathbb{Z}$.
It is also a known fact that the restriction of $v$ to the center is Henselian with value group 
$\Gamma_F=\oplus_{i=1}^r\left(n_i\mathbb{Z}\oplus n_i\mathbb{Z}\right)$. 

To simplify our notations, for each $\alpha=(i_1, j_1,\dots, i_r,j_r)$, we denote the monomial $x_1^{i_1}y_1^{j_1}\dots x_r^{i_r}y_r^{j_r}$ by $x^{\alpha}$.
Using this notation, we can rewrite \eqref{e1-1} in the form 
$$
f=\sum_{\alpha_1<\alpha_2<\dots}a_{\alpha_i}x^{\alpha_i}
$$
where $\supp(f)=\{\alpha_1,\alpha_2,\dots\}$ and $a_{\alpha_1}\neq 0$. In particular, we have
\begin{equation}\label{e1-2}
  \ker (v)=\{a_0+\sum_{0<\alpha_1<\alpha_2<\dots}a_{\alpha_i}x^{\alpha_i}|a_{\alpha_i}\in k^*\ \text{for all}\ \alpha_i\}=k^*\left(1+\mathfrak{m}_{D}\right)
\end{equation}
where $D=\Delta_{2r}(k; n_1,\dots,n_r)$.

Now, we are ready for the following proposition.
\begin{prop}\label{p1}
Let $D=\Delta_{2r}(k; n_1,\dots,n_r)$. Let $K$ be a maximal subfield of $D$ which is Galois over $F$. Let $H=N_{D^*}(K^*)$, the normalizer of 
$K^*$ in $D^*$. Then there is a diagram
of group homomorphisms
\begin{equation}\label{ee}
\xymatrix{
1\ar[r]&H/F^*\left(H\cap(1+\mathfrak{m}_{D})\right)\ar[r]\ar[d]&\oplus_{i=1}^r\left(\mathbb{Z}_{n_i}\oplus \mathbb{Z}_{n_i}\right)\\
&\Gal(K/F)\ar[d]&\\
&1&
}
\end{equation}
with exact row and column. In particular, both $H/F^*\left(H\cap(1+\mathfrak{m}_{D})\right)$ and $\Gal(K/F)$ are abelian.
\end{prop}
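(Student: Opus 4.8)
The plan is to build the two arrows of the diagram from the two natural homomorphisms out of $H$: the restriction of the valuation and the conjugation action on $K$.

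First I would treat the horizontal arrow. Since $v$ is multiplicative on $D^*$, the composite $\bar v\colon H\to \Gamma_D/\Gamma_F$, $h\mapsto v(h)+\Gamma_F$, is a group homomorphism, and by the stated value groups its target is $\Gamma_D/\Gamma_F=\oplus_{i=1}^r(\mathbb{Z}_{n_i}\oplus\mathbb{Z}_{n_i})$. The content is to identify its kernel with $F^*(H\cap(1+\mathfrak{m}_D))$. One inclusion is immediate: $F^*\subseteq H$ (as $F$ is central) with $v(F^*)=\Gamma_F$, and every element of $1+\mathfrak{m}_D$ has value $0\in\Gamma_F$, so $F^*(H\cap(1+\mathfrak{m}_D))\subseteq\ker\bar v$. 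Conversely, if $h\in H$ and $v(h)\in\Gamma_F$, I would pick $c\in F^*$ with $v(c)=v(h)$; then $hc^{-1}\in H$ has value $0$, so by \eqref{e1-2} it lies in $k^*(1+\mathfrak{m}_D)$. Writing $hc^{-1}=\lambda(1+m)$ with $\lambda\in k^*\subseteq F^*$ gives $h=(c\lambda)(1+m)$ with $c\lambda\in F^*$ and $1+m=(c\lambda)^{-1}h\in H\cap(1+\mathfrak{m}_D)$, so $h\in F^*(H\cap(1+\mathfrak{m}_D))$. This yields the injection $H/F^*(H\cap(1+\mathfrak{m}_D))\hookrightarrow\oplus_{i=1}^r(\mathbb{Z}_{n_i}\oplus\mathbb{Z}_{n_i})$, i.e. the exact row.

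For the vertical arrow, each $h\in H=N_{D^*}(K^*)$ satisfies $hKh^{-1}=K$ and fixes $F$ pointwise (as $F$ is central), so conjugation defines $\phi\colon H\to\operatorname{Aut}(K/F)=\Gal(K/F)$. By the Skolem--Noether Theorem every $\sigma\in\Gal(K/F)$, viewed as a second $F$-embedding $K\hookrightarrow D$, is realized by an inner automorphism of $D$; the conjugating element normalizes $K$, hence lies in $H$, so $\phi$ is surjective. Its kernel is $H\cap C_{D^*}(K)$, and since $K$ is a maximal subfield of the degree-$n$ division algebra $D$ we have $[K:F]=n$ and $C_{D^*}(K)=K^*$; as $K^*\subseteq H$ this gives $\ker\phi=K^*$. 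The heart of the proof is then to show that $\phi$ factors through $H/F^*(H\cap(1+\mathfrak{m}_D))$, i.e. that $H\cap(1+\mathfrak{m}_D)\subseteq K^*=\ker\phi$ (the inclusion $F^*\subseteq K^*$ being clear). This is where Lemma \ref{lem2} and the Henselian hypothesis are indispensable. Given $h\in H\cap(1+\mathfrak{m}_D)$, put $\sigma=\phi(h)$; since $|\Gal(K/F)|=n$ we have $\sigma^n=\mathrm{id}$, hence $c:=h^n\in\ker\phi=K^*$. As $1+\mathfrak{m}_D$ is closed under multiplication, $c\in 1+\mathfrak{m}_D$, and since $c\in K$ we get $c\in 1+\mathfrak{m}_K$. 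Because $[K:F]<\infty$ the restriction of $v$ to $K$ is Henselian, and $\chr\overline{K}=\chr\overline{F}\nmid n$, so by the $n$-divisibility of $1+\mathfrak{m}_K$ there is $e\in 1+\mathfrak{m}_K$ with $e^n=c$. Now $h$ and $e$ are two $n$-th roots of $c$ lying in $1+\mathfrak{m}_D$, so Lemma \ref{lem2} forces $h=e\in K^*$; thus $\phi(h)=\mathrm{id}$. Consequently $\phi$ induces a surjection $H/F^*(H\cap(1+\mathfrak{m}_D))\twoheadrightarrow\Gal(K/F)$, which is the exact column.

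Finally, the exact row embeds $H/F^*(H\cap(1+\mathfrak{m}_D))$ into the abelian group $\oplus_{i=1}^r(\mathbb{Z}_{n_i}\oplus\mathbb{Z}_{n_i})$, so this quotient is abelian; and since $\Gal(K/F)$ is a homomorphic image of it via the exact column, $\Gal(K/F)$ is abelian as well. I expect the main obstacle to be precisely the factorization step $H\cap(1+\mathfrak{m}_D)\subseteq K^*$: both its formulation (observing that $h^n\in K^*$ reduces matters to uniqueness of $n$-th roots) and its reliance on passing to the Henselian subfield $K$ and invoking Lemma \ref{lem2}.
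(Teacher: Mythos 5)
Your proposal is correct and follows essentially the same route as the paper: the same valuation-induced map $\bar v$ with kernel $F^*\left(H\cap(1+\mathfrak{m}_D)\right)$ for the row, and the same conjugation map to $\Gal(K/F)$ (surjective by Skolem--Noether, kernel $K^*$ by maximality of $K$) for the column, with the key containment $H\cap(1+\mathfrak{m}_D)\subseteq K^*$ obtained exactly as in the paper by noting $h^n\in 1+\mathfrak{m}_K$ and invoking Henselianity of $K$ together with the uniqueness of $n$-th roots from Lemma \ref{lem2}. The only difference is that you spell out the kernel computation for the row in slightly more detail than the paper does.
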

\begin{proof}
The map $\bar{v}: H \rightarrow \Gamma_D / \Gamma_F=\bigoplus_{i=1}^r\left(\mathbb{Z}_{n_i} \oplus \mathbb{Z}_{n_i}\right)$ induced by the valuation $v$ has 
kernel $H \cap F^* \ker (v)$. But $F^*\ker (v)=F^*\left(1+\mathfrak{m}_D\right)$ by \eqref{e1-2}. Hence, $\operatorname{ker}(\bar{v})=H \cap F^*\left(1+\mathfrak{m}_D\right)=F^*\left(H \cap\left(1+\mathfrak{m}_D\right)\right)$. This shows that the row of \eqref{ee} is exact. 

For the column of \eqref{ee}, first let $\theta: H \rightarrow \operatorname{Gal}(K / F)$ be the map $h\mapsto \theta_h$ where $\theta_h:a\mapsto h^{-1}ah$ for all $a\in K$. By the Skolem-Noether Theorem \cite[p. 39]{Dra83}, $\theta$ is surjective. Also, 
$\operatorname{ker}(\theta)=C_H(K^*)$, the centralizer of $K^*$ in $H$. Since $K$ is a maximal subfield of $D$, $C_H(K^*)=K^*$. Hence, $\theta$ induces an isomorphism $H / K^* \cong \operatorname{Gal}(K / F)$. Now take any $h \in H \cap\left(1+\mathfrak{m}_D\right)$. Then, as $\left|H / K^*\right|=|\operatorname{Gal}(K / F)|=n$ 
(recall that $n=n_1\dots n_r$ is the index of $D$), one has $h^n \in K^* \cap\left(1+\mathfrak{m}_D\right)=1+\mathfrak{m}_K$. Because $K$ is Henselian, $h^n$ has a $n$-th root in $1+\mathfrak{m}_K$ which is $h$ itself, as this root is unique in $1+\mathfrak{m}_D$ by Lemma \ref{lem2}. So $h\in K$. Therefore, $\operatorname{ker}(\bar{v})=F^*\left(H \cap\left(1+\mathfrak{m}_D\right)\right) \subseteq K^*=\operatorname{ker}(\theta)$. Hence, $\theta$ induces a well-defined map $H / \operatorname{ker}(\bar{v}) \rightarrow \operatorname{Gal}(K / F)$, and this map is surjective as $\theta$ is surjective. Thus, the column of \eqref{ee} is well-defined and exact.
\end{proof}

\begin{thm}
If $\chr k\nmid n$ and $p^3|n$ for some prime $p$, then $UD(k,n)$ is noncrossed product.
\end{thm}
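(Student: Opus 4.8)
The plan is to argue by contradiction, combining Theorem \ref{t1} with the structural constraint supplied by Proposition \ref{p1}. Suppose $UD(k,n)$ were a crossed product; then it is a $G$-crossed product for some group $G$ with $|G|=n$, and Theorem \ref{t1} forces every division algebra of degree $n$ whose center contains a copy of $k$ to be a $G$-crossed product as well. I would exploit this by producing two Mal'cev--Neumann algebras of the form $\Delta_{2r}(k';n_1,\dots,n_r)$, over a suitable finite extension $k'\supseteq k$, chosen so that $G$ cannot simultaneously arise as the Galois group of a maximal subfield of both. Since $\chr k\nmid n$, no prime factor of $n$ equals $\chr k$, so I may take $k'=k(\zeta_n)$: this is a field of the same characteristic as $k$, it contains $k$ and all roots of unity needed for the construction, and its residue field in each such $\Delta$ is again $k'$. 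Hence $\chr\overline{F}=\chr k\nmid n$, so the hypothesis of Lemma \ref{lem2}, and therefore of Proposition \ref{p1}, is met for both algebras; note also that their centers contain $k'\supseteq k$, so Theorem \ref{t1} indeed applies to them.

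The key mechanism is the following reading of Proposition \ref{p1}: if $K$ is a maximal subfield of $\Delta_{2r}(k';n_1,\dots,n_r)$ that is Galois over the center $F$, then $\Gal(K/F)$ is a quotient of a subgroup of $\bigoplus_{i=1}^{r}\left(\mathbb{Z}_{n_i}\oplus\mathbb{Z}_{n_i}\right)$, hence a finite abelian group of order $n$; and since for finite abelian groups both the rank and the exponent cannot increase on passing to a subgroup or to a quotient, the (unique) Sylow $p$-subgroup of $\Gal(K/F)$ is controlled by the $p$-part of $\bigoplus_i\left(\mathbb{Z}_{n_i}\oplus\mathbb{Z}_{n_i}\right)$. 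Writing $p^s$ for the exact power of $p$ dividing $n$, so that $s\geq 3$, and $t=n/p^s$, I would make two choices of factorization $n=n_1\cdots n_r$. For the first algebra $D_1$ I concentrate the whole $p$-part in a single factor, taking $n_1=p^s$ (and one further factor $n_2=t$ when $t>1$); then the $p$-part of the value-group quotient is $\mathbb{Z}_{p^s}\oplus\mathbb{Z}_{p^s}$, so the Sylow $p$-subgroup $G_p$ of $G$, being a subquotient of $\mathbb{Z}_{p^s}\oplus\mathbb{Z}_{p^s}$ of order $p^s$, has rank at most $2$ and therefore exponent at least $p^{\lceil s/2\rceil}\geq p^2$. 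For the second algebra $D_2$ I split the $p$-part completely, taking $s$ factors equal to $p$ (together with a factor $t$ when $t>1$); then the $p$-part of the value-group quotient is elementary abelian, so $G_p$ is elementary abelian and its exponent is at most $p$.

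Putting the two together, the assumption that $UD(k,n)$ is a crossed product forces one and the same abelian group $G$ of order $n$ to occur as a Galois maximal-subfield group inside both $D_1$ and $D_2$, whence its Sylow $p$-subgroup $G_p$, which is nontrivial of order $p^s$ because $s\geq 3$, would have exponent $\geq p^2$ and exponent $\leq p$ at once. This contradiction shows that $UD(k,n)$ is not a crossed product. I expect the only delicate points to be bookkeeping rather than conceptual: checking that the chosen parameters satisfy $n_i\geq 2$ and give index exactly $n$, that $k'=k(\zeta_n)$ carries the required primitive roots of unity, and making explicit the elementary fact that rank and exponent of a finite abelian group are monotone under subquotients, since this is exactly what converts the embedding of Proposition \ref{p1} into the incompatible exponent bounds. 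It is worth recording that the hypothesis $p^3\mid n$ is used essentially: for $s=2$ the first algebra only forces $G_p$ to have exponent at least $p^{\lceil s/2\rceil}=p$, which is compatible with the elementary abelian constraint, so $G_p\cong\mathbb{Z}_p\oplus\mathbb{Z}_p$ survives and no contradiction arises.
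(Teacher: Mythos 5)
Your proof is correct and rests on the same machinery as the paper's (contradiction via Theorem \ref{t1}, two Mal'cev--Neumann algebras over $k(\zeta_n)$, and Proposition \ref{p1} read as saying that $\Gal(K/F)$ is an abelian subquotient of $\bigoplus_i(\mathbb{Z}_{n_i}\oplus\mathbb{Z}_{n_i})$), but you make a different choice of decompositions and compare a different invariant. The paper takes $D_1=\Delta_{2r}(k(\zeta_n);p_1,\dots,p_r)$ with $n=p_1\cdots p_r$ the full prime factorization, which pins $G$ down as $\bigoplus_j\mathbb{Z}_{p_j}$ and hence gives $\operatorname{rank}(G)\geq 3$ from $p^3\mid n$, and plays this against $D_2=\Delta_2(k(\zeta_n);n)$, where the target $\mathbb{Z}_n\oplus\mathbb{Z}_n$ forces $\operatorname{rank}(G)\leq 2$; the clash is in the rank of $G$. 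You instead concentrate the $p$-part into a single factor $n_1=p^s$ to force the Sylow $p$-subgroup $G_p$ (order $p^s$, rank $\leq 2$) to have exponent at least $p^{\lceil s/2\rceil}\geq p^2$, and split it into $s$ factors equal to $p$ to force $G_p$ elementary abelian; the clash is in the exponent of $G_p$. Both arguments use $p^3\mid n$ in the same essential place, and both reduce to the monotonicity of rank and exponent of finite abelian groups under subquotients; your version needs the extra (elementary) observation that an abelian $p$-group of order $p^s$ and rank $\leq 2$ has exponent $\geq p^{\lceil s/2\rceil}$, but in exchange it localizes the entire argument at the prime $p$ rather than first identifying $G$ globally. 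Your bookkeeping caveats (the factors $n_i\geq 2$, dropping the factor $t=n/p^s$ when $t=1$, and the availability of $\zeta_n$ since $\chr k\nmid n$) are exactly the right ones and all check out.
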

\begin{proof}
On the contrary, suppose that $UD(k,n)$ is a $G$-crossed product for a group $G$ of order $n$. Let $n=p_1\dots p_r$ where
$p_i$ is prime for $i=1,\dots, r$ (note that these prime numbers are not necessarily distinct).
Let $D_1=\Delta_{2r}(k(\zeta_n); p_1,\dots, p_r)$ and $D_2=\Delta_{2}(k(\zeta_n); n)$.
By Theorem \ref{t1}, $D_1$ is a $G$-crossed product. Hence, by Proposition \ref{p1}, $G$ is abelian and each element of $G$ is of prime order. 
Therefore $G\cong \oplus_{j=1}^r\mathbb{Z}_{p_j}$ because $|G|=n$. But $p^3|n$.  So we must have $\operatorname{rank}(G)\geq 3$.
Using Theorem \ref{t1} again shows that $D_2$ is a $G$-crossed product.
Now, the exact diagram \eqref{ee} gives 
$$\operatorname{rank}(G)\leq \operatorname{rank}\left(H/F^*\left(H\cap(1+\mathfrak{m}_{D})\right)\right)\leq\operatorname{rank}\left(\mathbb{Z}_n\oplus\mathbb{Z}_n\right)=2.$$
This contradiction shows that $UD(k,n)$ is not a crossed product.
\end{proof}
It should be noted that what Amitsure proved about the maximal subfields of $\Delta_{2r}(k; n_1,\dots,n_r)$ is more than what 
was done in the above. In fact, he showed that if $k$ is algebraically closed and $n_i$ are prime for all $1\leq i\leq r$, then 
all maximal subfields of $\Delta_{2r}(k; n_1,\dots,n_r)$ are Galois extensions of its center with Galois group isomorphic to 
$\oplus_{i=1}^r\mathbb{Z}_{n_i}$. He obtained the Galois group information by some
clever but mysterious calculations using certain integer-valued triangular matrices.
His method, along with the Platonov's reduced $K$-theory, became a source of inspiration for others to develop 
the theory of valued division algebras. In \cite{Mot}, in an attempt to provide an elementary proof for the above results, 
the author presents a proof without using Henselian property of the fields of iterated Laurent series for the case $n=p^m$ where $p$ is a prime and $m\geq 3$. 
\bibliographystyle{amsplain}

\end{document}